\documentclass[11pt]{amsart}

\usepackage[margin=1.1in]{geometry}
\usepackage[T1]{fontenc}
\usepackage{float}
\usepackage{lmodern}
\usepackage{amsmath,amssymb,amsthm,mathtools}
\usepackage{enumitem}
\usepackage[colorlinks=true,linkcolor=blue,citecolor=blue,urlcolor=blue]{hyperref}
\usepackage[nameinlink,capitalise]{cleveref}
\crefname{conjecture}{Conjecture}{Conjectures}
\usepackage{mathtools}
\usepackage{microtype}

\usepackage{soul}

\newtheorem{theorem}{Theorem}
\newtheorem{conjecture}{Conjecture}

\newtheorem{lemma}{Lemma}

\theoremstyle{definition}
\newtheorem*{remark}{Remark}

\DeclareMathOperator{\spol}{sp}
\DeclareMathOperator{\sr}{sr}
\DeclareMathOperator{\num}{num}
\DeclareMathOperator{\den}{den}

\newcommand{\floor}[1]{\left\lfloor #1\right\rfloor}
\newcommand{\bbfm}{Ballantine--Beck--Feigon--Maurischat}
\newcommand{\ii}{\mathrm{i}}
\newcommand{\OO}{\mathcal O}

\newcommand{\TT}{\mathcal T}
\newcommand{\BB}{\mathcal B}
\newcommand{\ZZ}{\mathbb Z}

\title{Reciprocals of Partition Polynomials}
\author{Evan Chen, Ken Ono and Jujian Zhang}

\address{Axiom Math, 124 University Avenue, Palo Alto, CA 94301}

\email{evan@axiommath.ai}
\email{ken@axiommath.ai}
\email{jujian@axiommath.ai}
\date{May 19, 2026}
\subjclass[2020]{11R09, 11P81, 05A17, 11A05}

\begin{document}

\begin{abstract}
Ballantine--Beck--Feigon--Maurischat introduced the subsum polynomial
\[
\operatorname{sp}(\lambda,x):=\prod_i (1+x^{\lambda_i})
\]
attached to an integer partition $\lambda$, and studied rational functions obtained by
summing reciprocals of these polynomials over natural classes of partitions. They posed ten
conjectures which naturally divide into coprimality and divisibility questions, special-value and
recurrence formulas, and coefficient-shape problems.
We prove all of the
conjectures in the first two families: the ordinary and binary coprimality/divisibility
conjectures, and the odd and ternary special-value/recurrence conjectures.
AxiomProver autonomously produced Lean/mathlib formalizations and
machine-checkable proofs of these six conjectures, and 
also discovered a counterexample to the statement as printed; the corrected form remains open.
\end{abstract}

\maketitle

\section{Introduction}
\subsection{Background}
The study of integer partitions is a classical subject in number theory and
combinatorics; we follow the standard notation and terminology described, for
example, in Andrews' book \cite{Andrews}.  A \emph{partition} of a nonnegative
integer $n$ is a finite nonincreasing sequence
\[
  \lambda=(\lambda_1,\lambda_2,\ldots,\lambda_k)
\]
of positive integers whose sum is $n$.  We write $\lambda\vdash n$.  The
empty sequence is the unique partition of $0$.  The length of $\lambda$ is
$\ell(\lambda) \coloneq k$, and $m_\lambda(i)$ denotes the multiplicity of the part
$i$ in $\lambda$.  In particular, we have
\[
  \sum_{i\ge1} i\,m_\lambda(i)=n
  \qquad {\text {\rm and}}\qquad
  \ell(\lambda)=\sum_{i\ge1}m_\lambda(i).
\]
\bbfm\ recently introduced \cite{BBFM} a family of interesting rational functions attached
to partitions.  We now recall their construction in the notation used
throughout this paper.  For a partition $\lambda$, its \emph{subsum polynomial}
is
\[
  \spol(\lambda,x) \coloneq \prod_{j=1}^k(1+x^{\lambda_j}).
\]
Equivalently, we have
\[
  \spol(\lambda,x)=\prod_{i\ge1}(1+x^i)^{m_\lambda(i)}.
\]
This polynomial is the generating function for submultisets of the multiset of
parts of $\lambda$.  The reciprocal sum over ordinary partitions is
\begin{equation}\label{eq:ordinary-sr}
  \sr(n,x) \coloneq \sum_{\lambda\vdash n}\frac{1}{\spol(\lambda,x)}.
\end{equation}

\subsubsection{The numerator and denominator of $\sr(n,x)$}
A common denominator for \eqref{eq:ordinary-sr}
is
\[
   \den^*(n,x) \coloneq \prod_{i\ge1}(1+x^i)^{\floor{n/i}},
\]
and the corresponding numerator is
\[
   \num^*(n,x) \coloneq \sum_{\lambda\vdash n}h_\lambda(x),
\]
where
\[
   h_\lambda(x) \coloneq
   \prod_{i\ge1}(1+x^i)^{\floor{n/i}-m_\lambda(i)}.
\]
(The products are finite in effect,
since $\floor{n/i}=0$ for $i>n$.
The exponents of $h_\lambda(x)$ are nonnegative
because a partition of $n$ can contain at most $\floor{n/i}$ parts equal to $i$.)
In other words,
\begin{equation}\label{eq:unreduced-ordinary}
   \sr(n,x)=\frac{\num^*(n,x)}{\den^*(n,x)}.
\end{equation}

However the quotient \eqref{eq:unreduced-ordinary} need not be in lowest terms.
The first cancellation considered by \bbfm\ removes the common divisor of the
individual summands.
Define
\[
   G(n,x) \coloneq \gcd\{h_\lambda(x):\lambda\vdash n\},
\]
and then define
\begin{equation}\label{eq:reduced-defs}
   \num(n,x) \coloneq \frac{\num^*(n,x)}{G(n,x)}
   \qquad {\text {\rm and}}\qquad
   \den(n,x) \coloneq \frac{\den^*(n,x)}{G(n,x)}.
\end{equation}
Therefore, we have
\begin{equation}\label{refinement}
   \sr(n,x)=\frac{\num(n,x)}{\den(n,x)}.
\end{equation}
We use the empty-partition conventions
$G(0,x) = 1$, $h_{\emptyset}(x) = 1$, $\num(0,x) = 1$.

We stress that $G(n,x)$ is \emph{not} defined to be
$\gcd(\num^*(n,x),\den^*(n,x))$.
It is the common factor of the summands $h_\lambda(x)$.
Therefore, it is conceivable \emph{a priori} that
$\num(n,x)$ and $\den(n,x)$ are not relatively prime as polynomials in $\ZZ[x]$.

\subsubsection{Example for $n=4$}
Here we write out the example $n = 4$ in full.
The five partitions of $4$ are
\[
   (4),\qquad (3,1),\qquad (2,2),\qquad (2,1,1),\qquad (1,1,1,1),
\]
so their subsum polynomials are
\begin{align*}
  \lambda &= (4) &\implies&& \spol(\lambda, x) &= 1+x^4 \\
  \lambda &= (3,1) &\implies&& \spol(\lambda, x) &= (1+x^3)(1+x) \\
  \lambda &= (2,2) &\implies&& \spol(\lambda, x) &= (1+x^2)^2 \\
  \lambda &= (2,1,1) &\implies&& \spol(\lambda, x) &= (1+x^2)(1+x) \\
  \lambda &= (1,1,1,1) &\implies&& \spol(\lambda, x) &= (1+x)^4.
\end{align*}
Therefore, we have
\[
\begin{aligned}
\sr(4,x)
=\frac{1}{1+x^4}
  +\frac{1}{(1+x^3)(1+x)}
  +\frac{1}{(1+x^2)^2}
  +\frac{1}{(1+x^2)(1+x)^2}
  +\frac{1}{(1+x)^4}.
\end{aligned}
\]
The unreduced common denominator is
\[
   \den^*(4,x)
   =(1+x)^4(1+x^2)^2(1+x^3)(1+x^4).
\]
The five terms $h_\lambda(x)=\den^*(4,x)/\spol(\lambda,x)$ are
\begin{align*}
  \lambda &= (4) &\implies&&
    h_\lambda(x) &= (1+x)^4(1+x^2)^2(1+x^3) \\
  \lambda &= (3,1) &\implies&&
    h_\lambda(x) &= (1+x)^3(1+x^2)^2\phantom{(1+x^3)}(1+x^4) \\
  \lambda &= (2,2) &\implies&&
    h_\lambda(x) &= (1+x)^4\phantom{(1+x^2)^2}(1+x^3)(1+x^4) \\
  \lambda &= (2,1,1) &\implies&&
    h_\lambda(x) &= (1+x)^2(1+x^2)\phantom{^2}(1+x^3)(1+x^4) \\
  \lambda &= (1,1,1,1) &\implies&&
    h_\lambda(x) &= \phantom{(1+x)^4}(1+x^2)^2(1+x^3)(1+x^4)
\end{align*}
and $\num^*(4,x) = \sum_{h \vdash 4} h_\lambda(x)$.
Since $1+x$ divides $1+x^3$, each of these polynomials is divisible by $1+x$,
and in fact this is the greatest common factor:
\[
   G(4,x)=\gcd\left \{  h_\lambda(x) \ : \lambda\vdash 4\right \}=1+x.
\]
After dividing by this common factor, we obtain
\[
   \den(4,x)=\frac{\den^*(4,x)}{1+x}
   =(1+x)^3(1+x^2)^2(1+x^3)(1+x^4),
\]
and
\[
\begin{aligned}
\num(4,x)
&=\frac{\num^*(4,x)}{1+x}  \\
&=5x^{10}+8x^9+15x^8+14x^7+24x^6+20x^5
  +24x^4+14x^3+15x^2+8x+5.
\end{aligned}
\]
In this case $\sr(4,x)=\frac{\num(4,x)}{\den(4,x)}$ is already in lowest terms.

\subsubsection{Restricted classes of partitions}
We refer to the partitions of $n$ as \emph{ordinary partitions}.
We also use the following special classes:
$\OO(n)$ is the set of odd partitions of $n$, meaning
partitions all of whose parts are odd; $\BB(n)$ is the set of binary
partitions of $n$, meaning partitions all of whose parts are powers of $2$;
and $\TT(n)$ is the set of ternary partitions of $n$, meaning partitions all
of whose parts are powers of $3$.

The same notation will be used for the restricted classes of partitions.  If
$\mathcal C$ is one of $\OO,\BB,\TT$, let $A_{\mathcal C}$ be its set of
allowed parts:
\[
   A_\OO \coloneq \{1,3,5,\ldots\},\qquad
   A_\BB \coloneq \{1,2,4,8,\ldots\},\qquad
   A_\TT \coloneq \{1,3,9,27,\ldots\}.
\]
For $\lambda\in\mathcal C(n)$, define
\begin{align*}
  h_{\mathcal C,\lambda}(x) &\coloneq
  \prod_{i\in A_{\mathcal C}}(1+x^i)^{\floor{n/i}-m_\lambda(i)} \\
  G_{\mathcal C}(n,x) &\coloneq
   \gcd_{\lambda\in\mathcal C(n)} h_{\mathcal C,\lambda}(x),
  \end{align*}
and
\[
   \num_{\mathcal C}(n,x) \coloneq \frac{1}{G_{\mathcal C}(n,x)}
      \sum_{\lambda\in\mathcal C(n)}h_{\mathcal C,\lambda}(x)
   \qquad {\text {\rm and}}\qquad
   \den_{\mathcal C}(n,x) \coloneq
   \frac{\prod_{i\in A_{\mathcal C}}(1+x^i)^{\floor{n/i}}}
        {G_{\mathcal C}(n,x)}.
\]

\subsection{\bbfm's Ten Conjectures}
In the recent paper \cite{BBFM},
\bbfm\ make ten explicit conjectures about these reciprocals
of partition polynomials.  We list them here in the notation fixed above, so
that the statements proved later can be compared directly with the original
conjectures.

\subsubsection{Ordinary partitions}
The first two conjectures concern the ordinary numerator and
ask whether it is irreducible, and then whether \eqref{refinement} is already
in lowest terms.

\begin{conjecture}\label{conj:conj1}
For $n\ge1$, the polynomial $\num(n,x)$ is irreducible over $\ZZ$.
\end{conjecture}

\begin{conjecture}\label{conj:conj2}
For $n\ge1$, we have $\gcd(\num(n,x),\den(n,x))=1$.
\end{conjecture}

The following two conjectures concern coefficient-shape properties of the
ordinary numerator and denominator.
Recall that a finite sequence is \emph{unimodal} if it weakly increases up to some point
and then weakly decreases;
and a finite sequence $(a_i)$ of nonnegative numbers is
\emph{log-concave} if $a_i^2\ge a_{i-1}a_{i+1}$ for every internal index $i$.

\begin{conjecture}\label{conj:conj3}
Write $\num(n,x)=\num_0(n,x)+\num_1(n,x)$, where $\num_0$ contains the even
powers of $x$ and $\num_1$ contains the odd powers of $x$.  Then
$\num_0(n,x)$ is unimodal.
\end{conjecture}

\begin{conjecture}\label{conj:conj4}
The sequence of coefficients of $\den(n,x)$ is log-concave except for
$n=3,5,6,7$.
\end{conjecture}

\subsubsection{Binary partitions}
\label{sec:bin}
The next three conjectures concern binary partitions.

\begin{conjecture}\label{conj:conj5}
For the binary numerator and denominator, we have
\[
   \gcd(\num_\BB(n,x),\den_\BB(n,x))=1
   \qquad(n\ge1).
\]
\end{conjecture}

\begin{conjecture}\label{conj:conj6}
  For $n>5$, the binary numerator $\num_\BB(n,x)$ is unimodal.
\end{conjecture}

\begin{remark}
  \cref{conj:conj6} in \cite{BBFM} originally
  asserted that $\num_\BB(n,x)$ is log-concave for $n\geq 2.$
  This fails for $n=4$, where
  \[ \num_\BB(4,x) = 4+10x+18x^2+18x^3+20x^4+18x^5+18x^6+10x^7+4x^8.  \]
  The coefficient sequence is unimodal, but it is not log-concave, since $18^2<18\cdot20$. The authors of [2] have informed us that their intended
log-concavity conjecture excludes the small cases \(n=4,5\), and remains open for
 \(n>5\).
\end{remark}

\begin{conjecture}\label{conj:conj7}
For $n\ge2$ and every $2^s\le n$, we have
\[
   (1+x^{2^s})\nmid \num_\BB(n,x).
\]
\end{conjecture}

\begin{remark}
It is noted in \cite{BBFM} that \cref{conj:conj5} and \cref{conj:conj7} are equivalent.
\end{remark}

\subsubsection{Odd and ternary partitions}
The remaining conjectures\footnote{The formulation of \cref{conj:conj10}
  printed in \cite{BBFM} reads $t(n) = \Delta(t(3n-2)/2^{2n})$.
  If read literally as applying the finite-difference
  operator to the sequence $\frac{t(3n-2)}{2^{2n}}$,
  then the statement is false, since, e.g.,
  $\frac{t(4)}{2^4}-\frac{t(1)}{2^2} =\frac{5}{16}-\frac14 =\frac1{16} \neq t(1) = 1$.
  We believe that the statement given in \cref{conj:conj10}
  is the intended reading (note we have used $t(3n+1) = t(3n)$).}
treated here involve odd and ternary partitions.  We
write $v_p(M)$ for the exponent of the prime $p$ in a
positive integer $M$.

\begin{conjecture}\label{conj:conj8}
If $o(m)$ denotes the largest odd divisor of $m$, then
$\num_\OO(n,-1)=o(n!)$ for $n \ge 1$.
\end{conjecture}

\begin{conjecture}\label{conj:conj9}
Let $s(n) \coloneq \num_\TT(n,-1)$ for $n \ge 1$.
Then $s(1)=s(2)=1$, and
$s(3n)=s(3n+1)=s(3n+2)=3^{v_3((3n)!)}$ for $n \ge 1$.
\end{conjecture}

\begin{conjecture}\label{conj:conj10}
For $n \ge 0$, let $t(n) \coloneq \num_\TT(n,1)$,
with the convention $t(0) \coloneq 1$,
Then $t(3n)=t(3n+1)=t(3n+2)$ for $n \ge 0$, and
\[ t(n)=\frac{t(3n)-t(3n-2)}{2^{2n}} \]
for $n \ge 1$.
In particular, $t(1)=t(2)=1$.
\end{conjecture}

\subsection{Results}
In this note we prove six of the ten conjectures
using elementary number theory and facts about cyclotomic polynomials.
The precise results are as follows.

\begin{theorem}[\cref{conj:conj2}]\label{thm:intro-ordinary}
For every $n\ge1$, we have
\[
   \gcd(\num(n,x),\den(n,x))=1.
\]
\end{theorem}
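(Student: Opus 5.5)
The plan is to work one cyclotomic factor at a time. Since $1+x^i=\prod_{d\mid 2i,\ d\nmid i}\Phi_d(x)$, every irreducible factor of $\den^*(n,x)$, and hence of $\den(n,x)$, is a cyclotomic polynomial $\Phi_d$ with $d$ even. Moreover, $\Phi_d\mid 1+x^i$ exactly when $i$ is an odd multiple of $a\coloneq d/2$. So it suffices to show the following: for each even $d$ with $a=d/2\le n$, and a primitive $d$-th root of unity $\zeta$, we have $\num(n,\zeta)\neq 0$. Because $\Phi_d$ is irreducible, this excludes $\Phi_d\mid\num(n,x)$.

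\emph{Step 1 (which summands survive at $\zeta$).} The exponent of $\Phi_d$ in $h_\lambda$ is
\[
e_d(\lambda)=\sum_{k\text{ odd}}\Bigl(\floor{n/(ka)}-m_\lambda(ka)\Bigr).
\]
The exponent of $\Phi_d$ in $G(n,x)$ is $\min_\lambda e_d(\lambda)$. Hence $(h_\lambda/G)(\zeta)=0$ unless $\lambda$ minimizes $e_d$, that is, unless $\lambda$ maximizes the number of parts that are odd multiples of $a$. Write $n=qa+r$ with $0\le r<a$. At most $q$ such parts fit, and $q$ is attained only when all of them equal $a$. Indeed, if $q$ such parts included some $ka$ with $k\ge3$, their sum would be at least $qa+2a>n$. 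Therefore the minimizers are exactly $\lambda=(a^q)\cup\mu$ with $\mu\vdash r$ an arbitrary partition, and all parts of $\mu$ are $<a$.

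\emph{Step 2 (factor out a common constant).} For these $\lambda$ we have
\[
\frac{h_\lambda}{G}=\frac{\den^*(n,x)}{G(n,x)\,(1+x^a)^q}\cdot\frac{1}{\spol(\mu,x)}.
\]
In the first factor all powers of $\Phi_d$ cancel exactly, because both $\den^*/G$ and $(1+x^a)^q$ carry the minimal exponent. So this first factor is a rational function whose value $C$ at $\zeta$ is finite and nonzero, and it is common to all minimizers. None of $1+\zeta^{\mu_j}$ vanishes, since $\mu_j<a$ is not an odd multiple of $a$. Consequently
\[
\num(n,\zeta)=C\sum_{\mu\vdash r}\frac{1}{\spol(\mu,\zeta)}=C\,\sr(r,\zeta).
\]

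\emph{Step 3 (nonvanishing of $\sr(r,\zeta)$; the key point).} This is where I expect the real obstacle to lie, and I would handle it by an argument-of-complex-numbers trick rather than algebra. Take $\zeta=e^{2\pi\ii/d}$. For $1\le j<a=d/2$,
\[
1+\zeta^j=2\cos(\pi j/d)\,e^{\ii\pi j/d},
\]
and $\cos(\pi j/d)>0$. Hence for $\mu\vdash r$,
\[
\frac{1}{\spol(\mu,\zeta)}=\Bigl(\prod_j 2\cos(\pi\mu_j/d)\Bigr)^{-1}e^{-\ii\pi r/d}.
\]
Each term is a positive real multiple of the same unit $e^{-\ii\pi r/d}$, because $\sum_j\mu_j=r$. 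The sum is therefore nonzero; when $r=0$ it is simply $1$. Thus $\num(n,\zeta)\ne0$ for every even $d$ dividing into $\den$, and so $\gcd(\num(n,x),\den(n,x))=1$. The care needed is mostly in Step 2: I must check that the $\Phi_d$-adic bookkeeping of $G$ really makes the common factor nonvanishing at $\zeta$, so that only the minimizers contribute.
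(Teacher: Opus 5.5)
Your proof is correct and follows essentially the same route as the paper. You work one cyclotomic factor $\Phi_d$ of the denominator at a time, use the minimizer analysis to reduce to the partitions $(a^q)\cup\mu$ with $\mu\vdash r$ (the paper's remainder reduction, with your $d$ playing the role of the paper's $2d$), and finish with the same-ray nonvanishing argument. Two small remarks. You reduce to $\sr(r,\zeta)$ rather than $\num(r,\zeta)$, which neatly sidesteps the bookkeeping with $G(r,x)$. In Step 2 the accurate phrasing is that $\den^*/G$ and $(1+x^a)^q$ both carry $\Phi_d$-exponent $q$; equivalently, $\den^*/(1+x^a)^q$ and $G$ both carry the minimal exponent.
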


\begin{theorem}[\cref{conj:conj5,conj:conj7}]\label{thm:intro-binary}
For every $n\ge2$ and every $2^s\le n$, we have
\[
   1+x^{2^s}\nmid \num_\BB(n,x).
\]
Consequently, for every $n\ge1$, we have
\[
   \gcd(\num_\BB(n,x),\den_\BB(n,x))=1.
\]
\end{theorem}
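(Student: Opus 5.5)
The plan is to reduce everything to the nonvanishing of $\num_\BB(n,\zeta)$, where $\zeta$ is a primitive $2^{s+1}$-th root of unity. We have $1+x^{2^s}=\Phi_{2^{s+1}}(x)$, which is irreducible over $\mathbb{Q}$. So $1+x^{2^s}\mid\num_\BB(n,x)$ if and only if $\num_\BB(n,\zeta)=0$. Every irreducible factor of $\den_\BB(n,x)$ divides some $1+x^{2^s}$ with $2^s\le n$. Hence the gcd statement follows at once from the divisibility statement; the cases $n=1$ and small $n$ are checked directly.

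\emph{Step 1: identify the surviving terms.} Write $h_{\BB,\lambda}/G_\BB=\prod_j(1+x^{2^j})^{e_j(\lambda)}$, where $e_j(\lambda)=\floor{n/2^j}-m_\lambda(2^j)-g_j$ and $g_j=\min_\lambda(\floor{n/2^j}-m_\lambda(2^j))$. The partition consisting of $\floor{n/2^s}$ copies of $2^s$ followed by a binary partition of the remainder shows that $g_s=0$. At $x=\zeta$ the only vanishing factor is $1+\zeta^{2^s}$. Thus a summand survives exactly when $m_\lambda(2^s)=\floor{n/2^s}$. Put $r\coloneq n-2^s\floor{n/2^s}<2^s$. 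The surviving $\lambda$ then correspond bijectively to binary partitions $\mu$ of $r$, all of whose parts are smaller than $2^s$. For these, $m_\lambda(2^j)$ is fixed for $j>s$ (by the counting above, these multiplicities are zero), and the factors $1+\zeta^{2^j}=2$ for $j>s$ form a common nonzero constant. Factoring out this constant and the nonzero product $\prod_{j<s}(1+\zeta^{2^j})^{\floor{n/2^j}-g_j}$, we get
\[
\num_\BB(n,\zeta)=C\cdot S_s(r),\qquad S_s(r)\coloneq\sum_{\mu\in\BB(r)}\frac{1}{\spol(\mu,\zeta)},\qquad C\neq0 .
\]
So it suffices to show $S_s(r)\ne0$ for all $0\le r<2^s$. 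For $r=0$ and $r=1$ this is immediate.

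\emph{Step 2: nonvanishing of $S_s(r)$.} The first idea is a $2$-adic argument in $\mathbb{Z}_2[\zeta]$ with uniformizer $\pi=1-\zeta$, where $v(2)=2^s$ and $v(1+\zeta^{2^j})=2^j$ for $j<s$. Every term of $S_s(r)$ then has the same valuation $-r$, and modulo the next power of $\pi$ the sum reduces to $b(r)\pmod 2$. This is useless, since the binary partition function $b(r)$ is even for $r\ge2$. The argument must therefore be refined. Set $u_j\coloneq(1+\zeta^{2^j})/(1+\zeta)^{2^j}$, which is a unit, and write $S_s(r)=(1+\zeta)^{-r}\sum_\mu\prod_j u_j^{-m_\mu(2^j)}$. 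I would then expand the $u_j$ to higher order in $\pi$, using $(1+\zeta)^2\equiv1+\zeta^2\pmod 2$ and its iterates. Combining this with the classical recurrences for $b(r)$ (for example $b(2k)=b(2k-1)+b(k)$, with $b(2k+1)=b(2k)$), I would look for the first order of $\pi$ at which the sum is nonzero.

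A cleaner alternative is induction on $s$ via the recurrence obtained by splitting on whether $\mu$ contains a part $1$:
\[
S_s(r)=\frac{S_s(r-1)}{1+\zeta}+[2\mid r]\,S_{s-1}(r/2)\big|_{\zeta\mapsto\zeta^2}.
\]
Here $\zeta^2$ is a primitive $2^s$-th root of unity, so the second term is an instance of the same problem at level $s-1$. I would try to prove, by induction on $s$, an exact formula for $v_\pi(S_s(r))$ (or for its leading coefficient) that is strictly compatible with this recurrence. I expect this step to be the main obstacle. The other candidate route is to find a closed form for the generating function $\prod_{j<s}\bigl(1-y^{2^j}/(1+\zeta^{2^j})\bigr)^{-1}$ truncated at degree $<2^s$, exploiting the identity $\prod_{j<s}(1+\zeta^{2^j})=2/(1-\zeta)$.
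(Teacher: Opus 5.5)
\textbf{There is a genuine gap: Step 2, the nonvanishing of $S_s(r)$, is not proved.} Your Step 1 is correct and matches the paper's remainder reduction. Only partitions with exactly $\lfloor n/2^s\rfloor$ parts equal to $2^s$ survive at $\zeta$, and $\num_\BB(n,\zeta)=C\,S_s(r)$ with $C\neq0$ and $0\le r<2^s$.

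Step 2 is the whole content of the theorem. You show, correctly, that the first-order $2$-adic argument fails because $b(r)$ is even for $r\ge2$. You then list three possible continuations (higher-order $\pi$-adic expansion, induction on $s$ via the recurrence, a closed form for the truncated generating function) but carry none of them out, and you flag this step as the main obstacle. As written, $S_s(r)\neq0$ is not established, so neither the nondivisibility nor the gcd statement follows.

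\textbf{The missing idea is archimedean, not $2$-adic, and it uses your freedom to choose the root.} Since $1+x^{2^s}$ is irreducible, it suffices to show nonvanishing at the single root $\zeta=e^{\pi\ii/2^s}$. Every part $2^j$ of $\mu\in\BB(r)$ satisfies $2^j\le r<2^s$. For such $j$ one has $1+\zeta^{2^j}=2\cos(\pi 2^j/2^{s+1})\,e^{\pi\ii 2^j/2^{s+1}}$, and the cosine is strictly positive. Because $\sum_j 2^j m_\mu(2^j)=r$, each term $1/\spol(\mu,\zeta)$ has positive modulus and argument $-\pi r/2^{s+1}$, independent of $\mu$. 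So all terms of $S_s(r)$ lie on one open ray, and their sum cannot vanish. This is the paper's same-ray argument.

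Your own normalization already finishes the proof once you make this choice of root. With $\zeta=e^{\pi\ii/2^s}$, each $u_j=(1+\zeta^{2^j})/(1+\zeta)^{2^j}=2\cos(\pi 2^j/2^{s+1})/\bigl(2\cos(\pi/2^{s+1})\bigr)^{2^j}$ is a positive real number for $j<s$. Hence $\sum_{\mu}\prod_j u_j^{-m_\mu(2^j)}$ is a sum of $b(r)$ positive reals, and $S_s(r)\neq 0$. No refined $2$-adic expansion or induction on $s$ is needed.
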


\begin{theorem}[\cref{conj:conj8}]\label{thm:intro-odd}
For every $n\ge1$, we have
\[
   \num_\OO(n,-1)=o(n!),
\]
where $o(M)$ denotes the largest odd divisor of $M$.
\end{theorem}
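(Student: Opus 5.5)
The plan is to work one cyclotomic factor at a time. Since $G_\OO(n,x)$ is a gcd of products of polynomials $1+x^i$, each of which factors into cyclotomic polynomials, the exponent of each $\Phi_d$ in $G_\OO(n,x)$ is the minimum over $\lambda\in\OO(n)$ of its exponent in $h_{\OO,\lambda}(x)$. For odd $i$ we have
\[
1+x^i=\frac{x^{2i}-1}{x^i-1}=\prod_{m\mid i}\Phi_{2m}(x),
\]
so only the $\Phi_{2m}$ with $m$ odd occur. Their values at $-1$ are
\[
\Phi_2(-1)=0,\qquad \Phi_{2m}(-1)=\Phi_m(1)\quad(m>1 \text{ odd}),
\]
and $\Phi_m(1)=p$ if $m=p^k$ is a prime power and $1$ otherwise. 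In particular every $\Phi_{2m}$ with $m>1$ is nonzero at $-1$.

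\textbf{Step 1 (the factor $\Phi_2=1+x$).} The exponent of $1+x$ in $h_{\OO,\lambda}$ is
\[
\sum_{i \text{ odd}}\bigl(\floor{n/i}-m_\lambda(i)\bigr)=N-\ell(\lambda),\qquad N:=\sum_{i\text{ odd}}\floor{n/i}.
\]
Since $\ell(\lambda)\le n$ with equality only for $\lambda=(1^n)$, the exponent of $1+x$ in $G_\OO$ is $N-n$. Hence $h_{\OO,\lambda}/G_\OO$ is divisible by $(1+x)^{n-\ell(\lambda)}$, which vanishes at $x=-1$ for every $\lambda\ne(1^n)$. It follows that
\[
\num_\OO(n,-1)=\Bigl(\frac{h_{\OO,(1^n)}}{G_\OO}\Bigr)(-1),
\]
and in this quotient the factor $1+x$ has cancelled completely.

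\textbf{Step 2 (the factors $\Phi_{2p^k}$).} Fix an odd prime power $p^k$. The exponent of $\Phi_{2p^k}$ in $h_{\OO,\lambda}$ is
\[
E_{p,k}-\sum_{p^k\mid i}m_\lambda(i),\qquad E_{p,k}:=\sum_{i \text{ odd},\ p^k\mid i}\floor{n/i}.
\]
I need the maximum of $\sum_{p^k\mid i}m_\lambda(i)$ over odd partitions. It is at most $\floor{n/p^k}$, because each such part is at least $p^k$. This bound is attained by $\floor{n/p^k}$ copies of $p^k$ together with $1$'s, since $p^k$ is odd. So the exponent of $\Phi_{2p^k}$ in $G_\OO$ is $E_{p,k}-\floor{n/p^k}$. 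In $h_{\OO,(1^n)}$ the exponent is $E_{p,k}$, so the exponent of $\Phi_{2p^k}$ in the quotient $h_{\OO,(1^n)}/G_\OO$ is exactly $\floor{n/p^k}$.

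\textbf{Step 3 (assembling).} Every other factor $\Phi_{2m}$, with $m$ odd and not a prime power, contributes $\Phi_m(1)=1$ at $x=-1$. Therefore
\[
\num_\OO(n,-1)=\prod_{p\ \text{odd}}\ \prod_{k\ge1}p^{\floor{n/p^k}}=\prod_{p\ \text{odd}}p^{v_p(n!)}=o(n!),
\]
by Legendre's formula. All factors are positive, so there is no sign issue.

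The main point needing care is Step 2. One must justify that the gcd is computed exponentwise on cyclotomic factors, which holds because $\ZZ[x]$ is a UFD and the $\Phi_d$ are distinct irreducibles. One must also confirm that the extremal partition is a genuine odd partition, which uses that $p^k$ is odd. Step 1's uniqueness argument, that only $(1^n)$ survives at $x=-1$, is the other crucial ingredient, and it is immediate from the length count.
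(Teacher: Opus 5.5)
Your proof is correct and follows essentially the same route as the paper: you compute the $\Phi_{2e}$-exponent of $G_\OO(n,x)$ by maximizing the number of parts divisible by $e$, use the $\Phi_2$-exponent $n-\ell(\lambda)$ to show that only $(1^n)$ survives at $x=-1$, and then evaluate with $\Phi_{2e}(-1)=\Phi_e(1)$ and Legendre's formula. The only cosmetic difference is that you compute exponents just for $e=1$ and for odd prime powers $e=p^k$, noting that the remaining factors evaluate to $1$, whereas the paper treats all odd $e$ uniformly.
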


\begin{theorem}[\cref{conj:conj9}]\label{thm:intro-ternary-minus}
For every $n\ge1$, we have
\[
   \num_\TT(n,-1)=3^{v_3(n!)}.
\]
In particular, for every $n\ge1$, we have
\[
   \num_\TT(3n,-1)=\num_\TT(3n+1,-1)=\num_\TT(3n+2,-1)
   =3^{v_3((3n)!)}.
\]
\end{theorem}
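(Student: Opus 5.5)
The plan is to evaluate everything at $x=-1$ through the cyclotomic factorization of the factors $1+x^{3^j}$, working in $\ZZ[x]$ with monic polynomials so that gcds are read off from exponents of irreducible factors. Since $3^j$ is odd, we have
\[
   1+x^{3^j}=\frac{x^{2\cdot 3^j}-1}{x^{3^j}-1}=\prod_{k=0}^{j}\Phi_{2\cdot 3^k}(x).
\]
The first key fact is the value at $-1$ of each factor. We have $\Phi_2(-1)=0$, while for $k\ge1$ we have $\Phi_{2\cdot3^k}(-1)=\Phi_{3^k}(1)=3$, using $\Phi_{2m}(x)=\Phi_m(-x)$ for odd $m>1$.

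For $\lambda\in\TT(n)$ put $e_j(\lambda)=\floor{n/3^j}-m_\lambda(3^j)\ge0$. Then
\[
   h_{\TT,\lambda}=\prod_{k\ge0}\Phi_{2\cdot3^k}^{E_k(\lambda)},\qquad E_k(\lambda)=\sum_{j\ge k}e_j(\lambda).
\]
Since the $\Phi_{2\cdot 3^k}$ are distinct irreducibles, $G_\TT(n,x)=\prod_k\Phi_{2\cdot3^k}^{M_k}$ with $M_k=\min_\lambda E_k(\lambda)$. Hence
\[
   \num_\TT(n,x)=\sum_{\lambda\in\TT(n)}\prod_{k\ge0}\Phi_{2\cdot3^k}(x)^{E_k(\lambda)-M_k},
\]
and evaluating at $-1$ gives
\[
   \num_\TT(n,-1)=\sum_{\lambda:\,E_0(\lambda)=M_0}3^{\sum_{k\ge1}(E_k(\lambda)-M_k)}.
\]
The terms with $E_0(\lambda)>M_0$ vanish because they carry a factor $\Phi_2(-1)=0$.

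Next I identify which $\lambda$ survive and what the minima are. We have $E_0(\lambda)=\sum_j\floor{n/3^j}-\ell(\lambda)$, which is minimized exactly when $\ell(\lambda)$ is maximal. So the only surviving partition is $\lambda=(1^n)$, and the sum collapses to a single term. For $k\ge1$,
\[
   E_k(\lambda)=\sum_{j\ge k}\floor{n/3^j}-\#\{\text{parts of }\lambda\text{ that are }\ge3^k\}.
\]
The number of such parts is at most $\floor{n/3^k}$, with equality for $\lambda=(3^k,\dots,3^k,1,\dots,1)$. Hence $M_k=\sum_{j\ge k}\floor{n/3^j}-\floor{n/3^k}$. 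On the other hand $(1^n)$ has no parts $\ge3^k$, so $E_k(1^n)-M_k=\floor{n/3^k}$. This identity holds trivially, as $0=0$, when $3^k>n$.

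Therefore $\num_\TT(n,-1)=3^{\sum_{k\ge1}\floor{n/3^k}}=3^{v_3(n!)}$ by Legendre's formula. The "in particular" follows because $v_3(m!)$ is constant for $m=3n,3n+1,3n+2$. The only delicate point is the bookkeeping that justifies treating $G_\TT$ exponent-by-exponent: the gcd is taken of monic products of distinct cyclotomic irreducibles, so it is exactly $\prod_k\Phi_{2\cdot3^k}^{M_k}$ with no content issue. After that, the rest is direct computation.
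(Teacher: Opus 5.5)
Your proposal is correct and follows essentially the same route as the paper: you factor $1+x^{3^k}$ into the cyclotomic polynomials $\Phi_{2\cdot 3^a}$, compute the exponents in $G_\TT(n,x)$ by minimizing over ternary partitions, observe that only $\Phi_2$ vanishes at $-1$ so that only $(1^n)$ survives, and finish with $\Phi_{2\cdot 3^a}(-1)=3$ and Legendre's formula. Your explicit remark that the gcd can be read off exponent by exponent, because it is taken over products of distinct monic irreducibles, is a point the paper leaves implicit.
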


\begin{theorem}[\cref{conj:conj10}]\label{thm:intro-ternary-one}
  Let $t(n) \coloneq \num_\TT(n,1)$ and put $t(0) \coloneq 1$.
  Then
  \begin{align*}
    t(3n)=t(3n+1) &= t(3n+2) \text{ for } n \ge 0, \\
    t(3n)-t(3n-2) &= 2^{2n}t(n) \text{ for } n \ge 1.
  \end{align*}
\end{theorem}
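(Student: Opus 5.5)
The plan is to reduce $t(n)=\num_\TT(n,1)$ to a weighted count of ternary partitions, and then read off both identities from a functional equation for the generating function.

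\emph{Step 1: compute $G_\TT(n,1)$.} Each factor $1+x^{3^k}$ equals $\prod_{d\mid 3^k}\Phi_{2d}(x)$. So $G_\TT(n,x)$, being a gcd of products of such factors, is a product of cyclotomic polynomials $\Phi_{2\cdot 3^j}$. In $\ZZ[x]$ the multiplicity of each $\Phi_{2\cdot3^j}$ in this gcd is the minimum over $\lambda\in\TT(n)$ of its multiplicity in $h_{\TT,\lambda}$. For $j\ge1$ the index $2\cdot 3^j$ is not a prime power, so $\Phi_{2\cdot3^j}(1)=1$; only $\Phi_2=1+x$ contributes at $x=1$, with $\Phi_2(1)=2$.

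The factor $1+x$ divides each $1+x^{3^k}$ exactly once. Hence its multiplicity in $h_{\TT,\lambda}$ is $D(n)-\ell(\lambda)$, where $D(n)=\sum_{k\ge0}\floor{n/3^k}$. The minimum over $\lambda$ is $D(n)-n$, attained at the all-ones partition. Thus $G_\TT(n,1)=2^{D(n)-n}$.

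\emph{Step 2: a combinatorial formula.} We have $h_{\TT,\lambda}(1)=2^{D(n)-\ell(\lambda)}$. Combining with Step 1,
\[
t(n)=\sum_{\lambda\in\TT(n)}2^{\,n-\ell(\lambda)}=\sum_{\lambda\in\TT(n)}\prod_j 2^{\lambda_j-1}.
\]
This also holds for $n=0$ with $t(0)=1$. Therefore
\[
\sum_{n\ge0}t(n)q^n=\prod_{k\ge0}\frac{1}{1-2^{3^k-1}q^{3^k}}.
\]
Write $t(n)=2^n b(n)$. Then $B(u)=\sum_n b(n)u^n=\prod_{k\ge0}(1-u^{3^k}/2)^{-1}$, and this product satisfies the functional equation
\[
(1-u/2)\,B(u)=B(u^3).
\]

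\emph{Step 3: extract coefficients.} Comparing coefficients of $u^m$ for $m\ge1$ gives $b(m)-\tfrac12 b(m-1)=b(m/3)$ if $3\mid m$, and $0$ otherwise.

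For $3\nmid m$ this says $t(m)=t(m-1)$, which yields $t(3n)=t(3n+1)=t(3n+2)$ for all $n\ge0$.

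For $m=3n$ with $n\ge1$, multiplying by $2^{3n}$ gives $t(3n)-t(3n-1)=2^{2n}t(n)$. Since $t(3n-1)=t(3n-2)$ by the first part, this is the stated recurrence.

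\emph{Main obstacle.} The one delicate point is Step 1: justifying that the gcd in $\ZZ[x]$ is the product of cyclotomic factors with minimal exponents, and that it has no content. The latter holds because all $h_{\TT,\lambda}$ are monic products of cyclotomic polynomials, and $\ZZ[x]$ is a UFD in which distinct $\Phi_m$ are non-associate irreducibles. Once that is in place, the rest is the routine generating-function manipulation above.
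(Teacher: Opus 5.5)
Your proposal is correct and follows the paper's proof essentially step for step. The paper likewise computes $G_\TT(n,1)=2^{\sum_{k\ge1}\floor{n/3^k}}$ from $\Phi_{2\cdot3^a}(1)=1$ for $a\ge1$, derives $t(n)=\sum_{\lambda\in\TT(n)}2^{n-\ell(\lambda)}$, and reads off both identities from $(1-q)T(q)=T(4q^3)$; your rescaled equation $(1-u/2)B(u)=B(u^3)$ is the same functional equation under $u=2q$, and your remark that the gcd has no content is a small justification the paper leaves implicit.
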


\smallskip
The results above resolve the conjectures of Ballantine--Beck--Feigon--Maurischat that concern
coprimality, divisibility, and special values. For clarity, we summarize below the status of all
ten conjectures in their paper. The remaining unresolved cases are the conjectures concerning
irreducibility and coefficient-shape phenomena; in the binary case, the log-concavity assertion is
false as stated.

We note that several of the reductions used below build directly on the
structural analysis in \cite{BBFM}.  In particular, the exponent calculation
for the common divisor, the reduction of evaluations at roots of unity to
a remainder problem, and the corresponding binary reduction were already
developed in \cite[Lemmas 3.1 and 3.14]{BBFM} and indicated in
\cite[Section 4]{BBFM}.  Our contribution in Sections 3 and 4 is to combine
these reductions with a same-ray noncancellation argument, which rules out
the remaining possible cyclotomic factors and proves the corresponding
coprimality and divisibility conjectures.
\smallskip

\begin{table}[H]
\centering
\renewcommand{\arraystretch}{1.25}
\begin{tabular}{|c|p{0.60\textwidth}|p{0.28\textwidth}|}
\hline
\textbf{\#} & \textbf{Topic} & \textbf{Status in this paper} \\
\hline
1 & Irreducibility of $\operatorname{num}(n,x)$ over $\mathbb{Z}$ & Open \\
\hline
2 & $\gcd(\operatorname{num}(n,x),\operatorname{den}(n,x))=1$ & Proved \\
\hline
3 & Unimodality of the even part $\operatorname{num}_0(n,x)$ & Open \\
\hline
4 & Log-concavity of $\operatorname{den}(n,x)$ for $n\not \in \{3,5,6,7\}$ & Open \\
\hline
5 & $\gcd(\operatorname{num}_\BB(n,x),\operatorname{den}_\BB(n,x))=1$ & Proved \\
\hline
6 & Unimodality and log-concavity of $\operatorname{num}_\BB(n,x)$ & Disproved (corrected \(n>5\) form open) \\
\hline
7 &  $(1+x^{2^i})\nmid \operatorname{num}_\BB(n,x)$ for $2^i\le n$ & Proved \\
\hline
8 & $\operatorname{num}_\OO(n,-1)=o(n!)$ for odd partitions & Proved \\
\hline
9 & Evaluation formula for $\operatorname{num}_\TT(n,-1)$  & Proved \\
\hline
10 & Recurrence formula for $\operatorname{num}_\TT(n,1)$ & Proved in corrected form \\
\hline
\end{tabular} \vskip.05in
\caption{Status of the BBFM conjectures addressed in this paper.}
\label{tab:bbfm-conjectures}
\end{table}

\medskip
The paper is organized as follows.
In \cref{sec:sec2} we record the
    elementary cyclotomic facts that will be used throughout the proofs. In \cref{sec:sec3}, we prove \cref{conj:conj2}
    for ordinary partitions by showing that the reduced numerator
    does not vanish at any root of unity that can occur as a zero of the reduced denominator.
    The key step is a reduction to a smaller remainder problem in which all surviving summands
    lie on a single open ray in the complex plane.
In \cref{sec:sec4}, we adapt the same idea to
    binary partitions, proving the nondivisibility statement of \cref{conj:conj7} and
    hence the equivalent coprimality statement of \cref{conj:conj5}.
In \cref{sec:sec5},
    we prove the evaluation in \cref{conj:conj8}
    for odd partitions by specializing at $x=-1$,
    where only the all-ones partition survives after cancellation.
In \cref{sec:sec6}, we treat ternary
    partitions: the specialization at $x=-1$ gives \cref{conj:conj9}, while the
    specialization at $x=1$ leads to a simple generating-function identity and
    proves the natural finite-difference form of \cref{conj:conj10}.
The appendix documents formal proofs of the theorems we proved,
and provides links to the relevant files for interested readers.
(Mathematicians not interested in automated theorem proving
can thus safely ignore the appendix.)

\section*{Acknowledgments}

We thank Cristina Ballantine, George Beck, Brooke Feigon, and Kathrin
Maurischat for introducing the subsum-polynomial framework and for formulating
the conjectures studied in this paper. We are especially grateful to Cristina Ballantine, Brooke
Feigon, and Kathrin Maurischat for a careful reading of an earlier draft. Their work provides
the starting point for the arguments here.

\section{Elementary cyclotomic facts}\label{sec:sec2}
Let $\Phi_m$ denote the $m$\textsuperscript{th} cyclotomic polynomial, irreducible over $\mathbb Z$.
We record two elementary facts about $\Phi_m$.
They are included to make the proof accessible to readers who have seen cyclotomic polynomials
but may not have used them in this way.
The first lemma identifies exactly which factors $1+x^i$ contain a given cyclotomic divisor.

\begin{lemma}\label{lem:cyclotomic-divisibility}
Let $d,i \ge 1$.
Then $\Phi_{2d}(x)$ divides $1+x^i$ if and only if $i=dj$ for some odd integer $j\ge1$.
When this occurs, $\Phi_{2d}(x)$ divides $1+x^i$ exactly once.
\end{lemma}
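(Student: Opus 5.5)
We reduce the statement to a question about orders of roots of unity. The polynomial $1+x^i$ has no repeated roots, and its roots are exactly the $\zeta\in\mathbb C$ with $\zeta^i=-1$. Since $\Phi_{2d}$ is monic, irreducible and separable, $\Phi_{2d}\mid 1+x^i$ holds if and only if one (equivalently every) primitive $2d$-th root of unity $\zeta$ satisfies $\zeta^i=-1$. The multiplicity claim is then immediate: $x^{2i}-1$ is squarefree because its derivative $2ix^{2i-1}$ is coprime to it, so its divisor $1+x^i$ is squarefree. Hence any $\Phi_{2d}$ dividing it does so exactly once.

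It remains to characterize when $\zeta^i=-1$ for $\zeta$ of exact order $2d$. Note that $\zeta^d=-1$, since $\zeta^d$ is a square root of $1$ different from $1$.

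If $i=dj$ with $j$ odd, then $\zeta^i=(\zeta^d)^j=(-1)^j=-1$.

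Conversely, suppose $\zeta^i=-1$. Then $\zeta^{2i}=1$, so $2d\mid 2i$, i.e.\ $d\mid i$. Write $i=dj$; then $-1=\zeta^{dj}=(-1)^j$, which forces $j$ to be odd.

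No step is a genuine obstacle. The only points needing care are the squarefreeness used for the "exactly once" claim and the observation that divisibility by the irreducible $\Phi_{2d}$ can be tested at a single primitive root.
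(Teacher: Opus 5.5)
Your proof is correct and follows essentially the same route as the paper: you test divisibility at a primitive $2d$-th root of unity $\zeta$ with $\zeta^d=-1$, characterize $\zeta^i=-1$ as $i=dj$ with $j$ odd, and get the multiplicity claim from the fact that $1+x^i$ has no repeated roots. You supply slightly more detail than the paper's brief argument, namely the order argument for the converse and the derivative check for squarefreeness.
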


\begin{proof}
Let $\zeta_{2d} \coloneq e^{\pi\ii/d}$.  Then $\zeta_{2d}$ is a primitive
$2d$-th root of unity and $\zeta_{2d}^d=-1$.  The polynomial
$\Phi_{2d}$ divides $1+x^i$ exactly when $\zeta_{2d}^i=-1$.  This happens
exactly when
\[ i\equiv d \pmod{2d}, \]
which is the same as saying $i=dj$ for an odd integer $j\ge1$.
The second part follows by noting $1+x^i$ has no double roots.
\end{proof}

The second lemma records the values at $1$ and $-1$ that will be needed in
the odd and ternary specializations.

\begin{lemma}\label{lem:cyclotomic-values}
For $m>1$, we have
\[
   \Phi_m(1)=
   \begin{cases}
      p, & m=p^a\text{ is a power of a prime }p,\\
      1, & m\text{ is not a prime power}.
   \end{cases}
\]
If $m>1$ is odd, then $\Phi_{2m}(-1)=\Phi_m(1)$.  In particular, for
$a\ge1$, we have
\[
   \Phi_{2\cdot3^a}(1)=1,
   \qquad
   \Phi_{2\cdot3^a}(-1)=3.
\]
\end{lemma}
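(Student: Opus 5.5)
The plan is to derive everything from the factorization $x^m-1=\prod_{d\mid m}\Phi_d(x)$ and to argue by induction on the number of prime factors of $m$ where a direct count is not enough.

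\textbf{Values at $1$.} Divide $x^m-1$ by $x-1$ to get
\[
1+x+\cdots+x^{m-1}=\prod_{d\mid m,\ d>1}\Phi_d(x),
\]
and evaluate at $x=1$:
\[
m=\prod_{d\mid m,\ d>1}\Phi_d(1).
\]
\begin{itemize}
\item For $m=p^a$, the divisors $d>1$ are $p,p^2,\dots,p^a$. Each $\Phi_{p^b}(x)=\Phi_p(x^{p^{b-1}})=1+x^{p^{b-1}}+\cdots+x^{(p-1)p^{b-1}}$, so $\Phi_{p^b}(1)=p$.
\item For general $m$, use strong induction on $m$. In the product $\prod_{d\mid m,\,d>1}\Phi_d(1)$, the prime-power divisors $p^b\mid m$ contribute $\prod_{p^a\parallel m}p^a=m$. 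Every non-prime-power divisor $d<m$ contributes $1$ by induction. Comparing with the left side $m$ forces $\Phi_m(1)=1$ when $m$ is not a prime power.
\end{itemize}
The only subtlety here is bookkeeping the divisor product, which I regard as routine.

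\textbf{The identity $\Phi_{2m}(-1)=\Phi_m(1)$ for odd $m>1$.} I will first establish $\Phi_{2m}(x)=\Phi_m(-x)$ for odd $m>1$.
\begin{itemize}
\item If $\zeta$ is a primitive $m$-th root of unity with $m$ odd, then $-\zeta$ has order $\mathrm{lcm}(2,m)=2m$. So the map $\zeta\mapsto-\zeta$ is a bijection from primitive $m$-th roots to primitive $2m$-th roots; both sets have size $\varphi(m)=\varphi(2m)$.
\item Hence $\Phi_m(-x)=\prod(-x-\zeta)=(-1)^{\varphi(m)}\prod(x+\zeta)=(-1)^{\varphi(m)}\Phi_{2m}(x)$.
\item Since $m>1$ is odd, $m\ge3$, so $\varphi(m)$ is even and the sign is $+1$.
\end{itemize}
Evaluating at $x=-1$ gives $\Phi_{2m}(-1)=\Phi_m(1)$.

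\textbf{The special cases for $a\ge1$.}
\begin{itemize}
\item $2\cdot3^a$ is divisible by two distinct primes, so it is not a prime power, and the first part gives $\Phi_{2\cdot 3^a}(1)=1$.
\item $3^a$ is odd and greater than $1$, so the identity above gives $\Phi_{2\cdot3^a}(-1)=\Phi_{3^a}(1)=3$.
\end{itemize}
The main point of care is the sign $(-1)^{\varphi(m)}$, which is where the hypothesis $m>1$ is used; everything else is standard.
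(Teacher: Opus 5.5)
Your proof is correct, but you reach $\Phi_m(1)$ by a different route from the paper.

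\textbf{The value $\Phi_m(1)$.} The paper starts from the M\"obius product formula $\Phi_m(x)=\prod_{e\mid m}(x^e-1)^{\mu(m/e)}$. It lets $x\to1$ to get the closed form $\Phi_m(1)=\prod_{e\mid m}e^{\mu(m/e)}$. It then computes the exponent of each prime $p$ as $-\sum_{f\mid m}\mu(f)v_p(f)$, using $\sum_{f\mid m}\mu(f)=0$ for $m>1$.

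You instead evaluate $\frac{x^m-1}{x-1}=\prod_{d\mid m,\,d>1}\Phi_d(x)$ at $x=1$ to get $\prod_{d\mid m,\,d>1}\Phi_d(1)=m$. You handle prime powers directly and settle the non-prime-power case by strong induction. This is the same divisor-product identity, with the M\"obius inversion carried out implicitly by the induction.

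Your version stays within polynomial evaluation and needs neither the M\"obius function nor a limit through a rational function with negative exponents. The paper's version gives an explicit closed formula and treats all $m$ uniformly, without induction.

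\textbf{The identity $\Phi_{2m}(-1)=\Phi_m(1)$.} You actually prove $\Phi_{2m}(x)=\Phi_m(-x)$ via the bijection $\zeta\mapsto-\zeta$, which the paper only asserts. You also correctly check that the sign $(-1)^{\varphi(m)}$ is $+1$ because $m\ge3$.

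\textbf{The special cases.} You obtain $\Phi_{2\cdot3^a}(1)=1$ directly from the non-prime-power case. The paper instead writes it as $\Phi_{3^a}(-1)=1$ without further justification.

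\textbf{A small mismatch.} Your opening sentence announces induction on the number of prime factors, but the argument you give is strong induction on $m$. That argument is fine. Note that induction on the number of \emph{distinct} primes alone would not suffice, since for example $6\mid 12$ and both have two distinct prime factors. So make the announcement match what you do.
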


\begin{proof}
Let $\mu$ denote the classical M\"obius function.  The standard product
formula
\[
   \Phi_m(x)=\prod_{e\mid m}(x^e-1)^{\mu(m/e)}
\]
implies, after taking the limit as $x\to1$, that
\[
   \Phi_m(1)=\prod_{e\mid m}e^{\mu(m/e)}.
\]
For a fixed prime $p$, let $v_p$ denote the usual $p$-adic valuation.
The exponent of $p$ in this product is
\[
   \sum_{e\mid m}\mu(m/e)v_p(e).
\]
Writing $e=m/f$, this becomes
\[
   \sum_{f\mid m}\mu(f)(v_p(m)-v_p(f)).
\]
Since $m>1$, $\sum_{f\mid m}\mu(f)=0$.  Therefore, the exponent is
$-\sum_{f\mid m}\mu(f)v_p(f)$.  This is $1$ if $m$ is a power of $p$, and
is $0$ otherwise.  This proves the formula for $\Phi_m(1)$.

If $m$ is odd, then $\Phi_{2m}(x)=\Phi_m(-x)$.  Therefore
$\Phi_{2m}(-1)=\Phi_m(1)$.  Finally,
$\Phi_{2\cdot3^a}(x)=\Phi_{3^a}(-x)$, so
$\Phi_{2\cdot3^a}(-1)=\Phi_{3^a}(1)=3$, while
\[ \Phi_{2\cdot3^a}(1)=\Phi_{3^a}(-1)=1. \qedhere \]
\end{proof}

\section{Ordinary partitions: proof of \cref{conj:conj2}}\label{sec:sec3}
Throughout this section,
$m_\lambda(i)$, $h_\lambda$, and $G(n,x)$
have the meanings fixed in the introduction.
We do not need a closed formula for the reduced denominator.
In what follows it will be useful to write
\[
   \num(n,x)=\sum_{\lambda\vdash n}q_\lambda(x),
\]
where
\[
   q_\lambda(x) \coloneq \frac{h_\lambda(x)}{G(n,x)}.
\]

We now prove \cref{thm:intro-ordinary}.
Since $\den(n,x)=\den^*(n,x)/G(n,x)$ divides $\den^*(n,x)$, every irreducible
factor of $\den(n,x)$ is a cyclotomic factor of some $1+x^i$ with
$1\le i\le n$.  Therefore, every irreducible factor of $\den(n,x)$ has
the form $\Phi_{2d}(x)$ for some $1\le d\le n$.  It is enough to prove
that $\num(n,x)$ does not vanish at a primitive $2d$-th root of unity for
any such $d$.  The first lemma computes how much of each possible
cyclotomic factor is removed by the common divisor $G(n,x)$. The following lemma is a cyclotomic reformulation of the exponent
calculation in \cite[Lemma 3.1]{BBFM}; the corresponding characterization
of divisibility by \(\Phi_{2d}\) is implicit in \cite[Lemma 3.14(a,b)]{BBFM}.
We include the short proof to keep the notation of the present paper
self-contained.

\begin{lemma}\label{lem:g-exponent}
Fix $n, d \ge 1$, and put $a \coloneq \floor{n/d}$.  The exponent of $\Phi_{2d}(x)$ in
$G(n,x)$ is
\[
   \sum_{\substack{j>1\\ j\text{ odd}}}\floor{\frac{n}{dj}}.
\]
Consequently the exponent of $\Phi_{2d}(x)$ in
$q_\lambda(x)=h_\lambda(x)/G(n,x)$ is
\[
   a-\sum_{\substack{j\ge1\\ j\text{ odd}}}m_\lambda(dj).
\]
\end{lemma}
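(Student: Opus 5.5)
The plan is to compute, for each partition $\lambda\vdash n$, the exact exponent of $\Phi_{2d}$ in $h_\lambda(x)$, and then minimize over $\lambda$. Since $\ZZ[x]$ is a UFD and every $h_\lambda$ is a product of factors $1+x^i$, whose irreducible factors are cyclotomic, the exponent of $\Phi_{2d}$ in $G(n,x)$ is exactly the minimum over $\lambda\vdash n$ of its exponent in $h_\lambda$.

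First, by \cref{lem:cyclotomic-divisibility}, $\Phi_{2d}$ divides $1+x^i$ exactly once when $i=dj$ with $j$ odd, and not at all otherwise. Hence the exponent of $\Phi_{2d}$ in $h_\lambda$ is
\[
   e_\lambda \coloneq \sum_{j\ge1,\ j\text{ odd}}\Bigl(\floor{\frac{n}{dj}}-m_\lambda(dj)\Bigr).
\]
The $j=1$ term is $a-m_\lambda(d)$.

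Next I minimize $e_\lambda$, which means maximizing $\sum_{j\text{ odd}} m_\lambda(dj)$. The constraint $\sum_j dj\,m_\lambda(dj)\le n$ gives $d\sum_j m_\lambda(dj)\le n$, so $\sum_j m_\lambda(dj)\le a$. The partition with $a$ parts equal to $d$, plus $n-ad<d$ ones (or, when $d=1$, simply all ones), attains this bound. I need to check that the ones do not contribute unexpectedly: if $d>1$, then $1$ is not of the form $dj$, so they are harmless. Thus
\[
   \min_\lambda e_\lambda=\sum_{j\text{ odd}}\floor{\frac{n}{dj}}-a=\sum_{j>1,\ j\text{ odd}}\floor{\frac{n}{dj}},
\]
which is the first claim.

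The second claim then follows by subtraction: the exponent in $q_\lambda=h_\lambda/G$ is $e_\lambda$ minus this minimum, which equals $a-\sum_{j\text{ odd}}m_\lambda(dj)$.

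The only subtle step is justifying that the gcd exponent is the minimum of the individual exponents. This holds because $\Phi_{2d}$ is irreducible and the gcd in $\ZZ[x]$ of these monic products of cyclotomic polynomials is computed prime by prime. The bound $\sum_j m_\lambda(dj)\le a$ is the key inequality, and I expect no real obstacle there.
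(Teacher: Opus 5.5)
Your proposal is correct and follows the paper's proof: it computes the $\Phi_{2d}$-exponent of $h_\lambda$ via \cref{lem:cyclotomic-divisibility}, bounds $\sum_{j\text{ odd}} m_\lambda(dj)$ by $a$ with equality at $(d^a,1^{n-ad})$, and subtracts the resulting minimum. Your justification that the gcd exponent is the minimum of the individual exponents (the $h_\lambda$ are monic products of cyclotomic polynomials in the UFD $\ZZ[x]$) only makes explicit a step the paper states without comment.
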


\begin{proof} This is the same minimization argument as in \cite[Lemma 3.1]{BBFM}, with
\((1+x^d)\)-divisibility translated to \(\Phi_{2d}\)-divisibility using
Lemma~1.
By \cref{lem:cyclotomic-divisibility}, the exponent of $\Phi_{2d}(x)$ in
$h_\lambda(x)$ is
\[
   \sum_{\substack{j\ge1\\ j\text{ odd}}}
   \left(\floor{\frac{n}{dj}}-m_\lambda(dj)\right).
\]
The exponent in $G(n,x)$ is the minimum of this quantity over all partitions
$\lambda\vdash n$.

The multiplicity sum
$\sum_{j\ge1,\,j\text{ odd}}m_\lambda(dj)$ is at most $\floor{n/d}$, because
the counted parts have total size at least
$d\sum_{j\ge1,\,j\text{ odd}}m_\lambda(dj)$.  This upper bound is attained by a
partition containing exactly $a=\floor{n/d}$ parts equal to $d$, with the
remaining $n-ad$ filled by ones.  Hence the minimum exponent is obtained by
subtracting $a$ from the full floor sum:
\[
   \sum_{\substack{j\ge1\\j\text{ odd}}}\floor{\frac{n}{dj}}-a
   =\sum_{\substack{j>1\\j\text{ odd}}}\floor{\frac{n}{dj}}.
\]
Subtracting this minimum from the exponent in $h_\lambda$ gives the
exponent in $q_\lambda$.
\end{proof}

The next lemma identifies the only summands that can survive after evaluating
at a primitive $2d$-th root of unity.  It reduces the problem from $n$ to
the remainder of $n$ modulo $d$. This remainder reduction is essentially \cite[Lemma 3.14(c,d)]{BBFM}.
The constant appearing below is denoted there in terms of \(f(n,d,x)\), and
an explicit product formula for it is given in \cite[Proposition 3.17]{BBFM}.
For our purposes, only its nonvanishing is needed.

\begin{lemma}[Remainder reduction]\label{lem:remainder-reduction}
Let $n, d \ge 1$, and write
\[
   n=ad+r,
   \qquad
   a \coloneq \floor{n/d},
   \qquad
   0\le r<d.
\]
Then we have
\[
   \num(n,\zeta_{2d})=C_{n,d}\,\num(r,\zeta_{2d})
\]
for some nonzero complex number $C_{n,d}$.  In particular, we have
\[
   \num(n,\zeta_{2d})\ne0
   \quad\Longleftrightarrow\quad
   \num(r,\zeta_{2d})\ne0.
\]
\end{lemma}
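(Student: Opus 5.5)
The plan is to evaluate $\num(n,x)=\sum_{\lambda\vdash n}q_\lambda(x)$ at $\zeta\coloneq\zeta_{2d}$ term by term. \cref{lem:g-exponent} shows that only a very rigid family of partitions can contribute. The surviving sum should then factor as a fixed rational function times $\num(r,x)$, and we check that this factor has neither a zero nor a pole at $\zeta$.

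\emph{Step 1: identify the survivors.} By \cref{lem:g-exponent}, the polynomial $q_\lambda(x)$ is divisible by $\Phi_{2d}(x)$ to the power $a-\sum_{j\text{ odd}}m_\lambda(dj)$. So $q_\lambda(\zeta)=0$ unless $\lambda$ has exactly $a$ parts that are odd multiples of $d$. These $a$ parts have total size at least $ad$. If any of them were $\ge 3d$, the total would be at least $ad+2d>ad+r=n$. Hence all $a$ of them equal $d$. The remaining parts form a partition $\mu\vdash r$, and since $r<d$ all parts of $\mu$ are $<d$. Conversely, every $\mu\vdash r$ gives such a partition $\lambda=\mu\cup(d^a)$. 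Therefore
\[
\num(n,\zeta)=\sum_{\mu\vdash r}q_{\mu\cup(d^a)}(\zeta).
\]

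\emph{Step 2: factor the surviving summands.} For $\lambda=\mu\cup(d^a)$ we have $m_\lambda(d)=a=\floor{n/d}$, so the factor $1+x^d$ is absent from $h_\lambda$. For $i\ne d$ we have $m_\lambda(i)=m_\mu(i)$, and $\floor{r/d}=0$. Writing $h^{(r)}_\mu(x)=\prod_i(1+x^i)^{\floor{r/i}-m_\mu(i)}$, we get
\[
h_\lambda(x)=P(x)\,h^{(r)}_\mu(x),\qquad P(x)\coloneq\prod_{i\ne d}(1+x^i)^{\floor{n/i}-\floor{r/i}},
\]
where $P$ does not depend on $\mu$. Summing over $\mu\vdash r$ gives the identity of rational functions
\[
\sum_{\mu\vdash r}q_{\mu\cup(d^a)}(x)=R(x)\,\num(r,x),\qquad R(x)\coloneq\frac{P(x)\,G(r,x)}{G(n,x)}.
\]
The left side is a polynomial. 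For $r=0$ this reads correctly with the conventions $G(0,x)=1$ and $\num(0,x)=1$.

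\emph{Step 3: show $R$ has order $0$ at $\zeta$.} This is the main bookkeeping step, and we do it purely by counting the exponent of $\Phi_{2d}$ using \cref{lem:cyclotomic-divisibility}.
\begin{itemize}
\item In $P$, the factor $\Phi_{2d}$ occurs to the power $\sum_{j>1,\,j\text{ odd}}\bigl(\floor{n/dj}-\floor{r/dj}\bigr)=\sum_{j>1,\,j\text{ odd}}\floor{n/dj}$, because $r<d$.
\item In $G(r,x)$, by \cref{lem:g-exponent} applied to $r$, the exponent is $\sum_{j>1}\floor{r/dj}=0$. This also holds trivially when $r=0$.
\item In $G(n,x)$, by \cref{lem:g-exponent}, the exponent is $\sum_{j>1,\,j\text{ odd}}\floor{n/dj}$.
\end{itemize}
These cancel, so $R$ has neither a zero nor a pole at $\zeta$. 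No other factors matter, since distinct cyclotomic polynomials do not vanish at $\zeta$.

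\emph{Conclusion.} Put $C_{n,d}\coloneq R(\zeta)\ne 0$. Evaluating the identity of Step 2 at $\zeta$ and combining it with Step 1 gives $\num(n,\zeta)=C_{n,d}\,\num(r,\zeta)$. The stated equivalence $\num(n,\zeta)\ne0\iff\num(r,\zeta)\ne0$ follows immediately.
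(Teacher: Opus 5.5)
Your proposal is correct and follows essentially the same route as the paper. You isolate the survivors $(d^a,\mu)$ with $\mu\vdash r$ via the $\Phi_{2d}$-exponent of the preceding lemma, factor out the $\mu$-independent rational function $\frac{G(r,x)}{G(n,x)}\prod_{i\ne d}(1+x^i)^{\floor{n/i}-\floor{r/i}}$, and check by $\Phi_{2d}$-exponent counting that it has neither a zero nor a pole at $\zeta_{2d}$. The only cosmetic difference is how you show $\Phi_{2d}\nmid G(r,x)$: you use the exponent formula and treat $r=0$ separately, whereas the paper observes directly that no factor of $\den^*(r,x)$ is divisible by $\Phi_{2d}$.
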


\begin{proof}
We first start by analyzing which terms of the sum
\[ \num(n, \zeta_{2d}) = \sum_{\lambda \vdash n} q_\lambda(\zeta_{2d}) \]
are nonzero.
By \cref{lem:g-exponent}, the exponent of $\Phi_{2d}$ in the reduced
summand $q_\lambda$ is
\[
   a-\sum_{\substack{j\ge1\\j\text{ odd}}}m_\lambda(dj).
\]
At $x=\zeta_{2d}$, a summand survives if and only if this exponent is zero, i.e.,
\[ \sum_{\substack{j\ge1\\j\text{ odd}}}m_\lambda(dj)=a.  \]
If any counted part $dj$ has $j>1$, then one of the $a$ counted parts has size at least $3d$,
so the counted parts alone have total size at least
$(a-1)d+3d=(a+2)d>ad+r=n$.  This is impossible.
We conclude the surviving partitions are exactly
\[
   \lambda=(d^a,\nu),
   \qquad
   \nu\vdash r,
\]
with the empty partition allowed when $r=0$.
In other words,
\begin{equation}
  \num(n, \zeta_{2d}) = \sum_{\nu \vdash r} q_{(d^a, \nu)}(\zeta_{2d}).
  \label{eq:nu_sum}
\end{equation}

Next we rewrite $q_{(d^a, \nu)}$ in terms of $q_\nu$ for $\nu \vdash r$.
Comparison of exponents gives the identity, in the field
of rational functions,
\[
   q_{(d^a,\nu)}(x)=F_{n,d}(x)q_\nu(x),
\]
where
\[
   F_{n,d}(x) \coloneq
   \frac{G(r,x)}{G(n,x)}
   \prod_{i\ne d}(1+x^i)^{\floor{n/i}-\floor{r/i}}.
\]
This factor is independent of $\nu$, so \eqref{eq:nu_sum} implies
\[
  \num(n,\zeta_{2d})=C_{n,d}\,\num(r,\zeta_{2d}),
  \] 
  where $C_{n,d} \coloneq F_{n,d}(\zeta_{2d}).$ In the notation of \cite[Lemma 3.14]{BBFM}, this constant is
\(f(n,d,\zeta_{2d})\); see also \cite[Proposition 3.17]{BBFM} for an
explicit formula.
It remains only to verify that $C_{n,d} \neq 0$.

A factor $1+x^i$ can vanish at
$\zeta_{2d}$ only through the cyclotomic factor $\Phi_{2d}$.  Since
$r<d$, no factor occurring in $\den^*(r,x)$, and hence no factor of
$G(r,x)$, is divisible by $\Phi_{2d}$.  Thus $G(r,\zeta_{2d})\ne0$.
The product
\[
   \prod_{i\ne d}(1+x^i)^{\floor{n/i}-\floor{r/i}}
\]
has $\Phi_{2d}$-exponent
\[
   \sum_{\substack{j>1\\j\text{ odd}}}
      \left(\floor{\frac{n}{dj}}-\floor{\frac{r}{dj}}\right)
   =\sum_{\substack{j>1\\j\text{ odd}}}\floor{\frac{n}{dj}},
\]
because $r<d$.
By \cref{lem:g-exponent}, this is exactly the $\Phi_{2d}$-exponent of $G(n,x)$.
After cancelling these equal powers of $\Phi_{2d}$,
every remaining numerator and denominator factor is nonzero at $\zeta_{2d}$.
\end{proof}

The reduction leaves the case $0\le r<d$.  The following same-ray argument
shows that the remaining terms cannot cancel.

\begin{lemma}\label{lem:same-ray}
Let $d\ge1$ and $0\le r<d$.  Then
\[
   \num(r,\zeta_{2d})\ne0.
\]
\end{lemma}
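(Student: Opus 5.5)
The plan is to avoid cyclotomic exponent bookkeeping entirely. When $r<d$, none of the factors $1+x^i$ appearing in $\den^*(r,x)$ vanish at $\zeta_{2d}$, so we may work directly with the unreduced sum $\sr(r,\zeta_{2d})$. We will then show that every summand of that sum points in the same direction in the complex plane.

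First dispose of $r=0$: by the empty-partition convention $\num(0,x)=1$, so $\num(0,\zeta_{2d})=1\ne0$. Now assume $1\le r<d$.

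\emph{Step 1: nothing in sight vanishes.} For $1\le i\le r<d$, \cref{lem:cyclotomic-divisibility} shows that $\Phi_{2d}\nmid 1+x^i$, because $i$ is not an odd multiple of $d$. Since every root of $1+x^i$ is a root of unity of order $2d'$ for some $d'$, the factor $1+x^i$ vanishes at $\zeta_{2d}$ only through $\Phi_{2d}$. Hence $\den^*(r,\zeta_{2d})\ne0$. As $G(r,x)$ divides $\den^*(r,x)$, also $G(r,\zeta_{2d})\ne0$. Therefore
\[
   \num(r,\zeta_{2d})=\frac{\den^*(r,\zeta_{2d})}{G(r,\zeta_{2d})}\sum_{\nu\vdash r}\frac{1}{\spol(\nu,\zeta_{2d})},
\]
and the prefactor is a nonzero complex number. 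It thus suffices to show that the sum $\sr(r,\zeta_{2d})$ is nonzero.

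\emph{Step 2: the same-ray computation.} With $\zeta_{2d}=e^{\pi\ii/d}$, we have for each $i$
\[
   1+\zeta_{2d}^{\,i}=e^{\pi\ii i/(2d)}\bigl(e^{-\pi\ii i/(2d)}+e^{\pi\ii i/(2d)}\bigr)=2\cos\!\Big(\frac{\pi i}{2d}\Big)\,e^{\pi\ii i/(2d)}.
\]
For $1\le i<d$ the angle $\pi i/(2d)$ lies in $(0,\pi/2)$, so the cosine is strictly positive. Taking the product over the parts of $\nu\vdash r$ gives
\[
   \frac{1}{\spol(\nu,\zeta_{2d})}=c_\nu\,e^{-\pi\ii r/(2d)},
   \qquad
   c_\nu\coloneq\prod_{j}\Big(2\cos\frac{\pi\nu_j}{2d}\Big)^{-1}>0 .
\]
The key point is that the phase depends only on $|\nu|=r$, not on $\nu$ itself.

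\emph{Step 3: conclude.} Summing over $\nu\vdash r$ gives
\[
   \sr(r,\zeta_{2d})=e^{-\pi\ii r/(2d)}\sum_{\nu\vdash r}c_\nu.
\]
This is nonzero, since it is a nonempty sum of positive reals times a unimodular factor. Combined with Step 1, this shows $\num(r,\zeta_{2d})\ne0$.

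The only real obstacle is the phase observation in Step 2: the positivity of the cosines requires every part to be less than $d$, which is exactly what the hypothesis $r<d$ guarantees. After that observation the argument is immediate.
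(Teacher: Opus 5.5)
Your proof is correct and is essentially the paper's same-ray argument. The paper shows that the summands $h_\nu(\zeta_{2d})$ all have argument $\frac{\pi}{2d}\bigl(\sum_i i\floor{r/i}-r\bigr)$. You instead factor out the common nonzero constant $\den^*(r,\zeta_{2d})/G(r,\zeta_{2d})$ first and observe that each $1/\spol(\nu,\zeta_{2d})$ has argument $-\pi r/(2d)$. This is the same computation with slightly lighter bookkeeping.
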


\begin{proof}
If $r=0$, then $\num(0,x)=1$.  Assume $1\le r<d$.  For $1\le i\le r$,
\[
   1+\zeta_{2d}^{\,i}
   =1+e^{\pi\ii i/d}
   =2\cos\left(\frac{\pi i}{2d}\right)e^{\pi\ii i/(2d)}.
\]
The cosine is strictly positive because $0<i<d$.  Thus each factor
$1+\zeta_{2d}^{\,i}$ has positive length and argument $\pi i/(2d)$.

For a partition $\nu\vdash r$, the value
$h_\nu(\zeta_{2d})$ has argument
\[
   \frac{\pi}{2d}
   \sum_{i=1}^r i\bigl(\floor{r/i}-m_\nu(i)\bigr).
\]
But $\sum_i i m_\nu(i)=r$, so this argument equals
\[
   \frac{\pi}{2d}\left(\sum_{i=1}^r i\floor{r/i}-r\right),
\]
which is independent of $\nu$.  Also, every factor involved is nonzero.
Therefore all numbers $h_\nu(\zeta_{2d})$ lie on a single open ray
from the origin.

Since $r<d$, no factor $1+x^i$ with $1\le i\le r$ is divisible by
$\Phi_{2d}$.  Thus $G(r,\zeta_{2d})\ne0$.  Dividing by this fixed nonzero
number only rotates and rescales the ray.  Hence the reduced summands
$q_\nu(\zeta_{2d})$ also lie on a single open ray.  Their sum cannot
be zero.
\end{proof}

\begin{proof}[Proof of \cref{thm:intro-ordinary}]
Every irreducible factor of $\den(n,x)$ is some $\Phi_{2d}(x)$ with
$1\le d\le n$.  Fix such a $d$.  By \cref{lem:remainder-reduction} and \cref{lem:same-ray},
\[
   \num(n,\zeta_{2d})\ne0.
\]
Hence $\Phi_{2d}\nmid\num(n,x)$.  Therefore no nonconstant irreducible factor
of $\den(n,x)$ divides $\num(n,x)$.  Since $\den(n,x)$ has constant term
$1$, no rational integer prime divides all its coefficients.  Thus
$\gcd(\num(n,x),\den(n,x))=1$ in $\ZZ[x]$.
\end{proof}

\section{Binary partitions: proof of \cref{conj:conj5} and \cref{conj:conj7}}\label{sec:sec4}

We next adapt the same root-of-unity idea to binary partitions.  Here the
allowed parts are $1,2,4,\ldots$, and the factors $1+x^{2^k}$ are already
pairwise coprime cyclotomic polynomials.  By the definitions in the
introduction,
\[
   h_{\BB,\lambda}(x) \coloneq
   \prod_{k\ge0}\left( 1+x^{2^k} \right)^{\floor{n/2^k}-m_\lambda(2^k)}.
\]

The following observation is \cite[Remark 6]{BBFM}; we recall the brief
argument for completeness.
\begin{lemma}
  We always have $G_\BB(n,x) = 1$, for every $n$.
  Hence,
  \[
     \num_\BB(n,x)=\sum_{\lambda\in\BB(n)}h_{\BB,\lambda}(x)
     \qquad {\text {\rm and}}\qquad
     \den_\BB(n,x)=\prod_{k\ge0}(1+x^{2^k})^{\floor{n/2^k}}.
  \]
\end{lemma}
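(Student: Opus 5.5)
The plan is to show that no irreducible polynomial divides every $h_{\BB,\lambda}$. The factors $1+x^{2^k}=\Phi_{2^{k+1}}(x)$ are distinct irreducible cyclotomic polynomials, so they are pairwise coprime. Hence every summand $h_{\BB,\lambda}$ is a product of powers of these irreducibles, and so is their gcd. The exponent of $1+x^{2^k}$ in $G_\BB(n,x)$ is therefore
\[
   \min_{\lambda\in\BB(n)}\Bigl(\floor{n/2^k}-m_\lambda(2^k)\Bigr),
\]
and it suffices to show that this minimum is $0$ for each $k$ with $2^k\le n$. For $2^k>n$ the exponent $\floor{n/2^k}$ is already $0$, so those $k$ need no attention.

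To see that the minimum is $0$, fix $k$ with $2^k\le n$. Write $n=a2^k+r$ with $a=\floor{n/2^k}$ and $0\le r<2^k$. Take the binary partition $\lambda=(2^k)^a\cup(1^r)$, filling the remainder with ones. This is a legitimate binary partition of $n$ because $1=2^0$ is an allowed part.

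When $k\ge1$, the ones do not affect the count of parts equal to $2^k$, so $m_\lambda(2^k)=a$ and the exponent vanishes. When $k=0$, the partition is simply $(1^n)$, and $m_\lambda(1)=n=\floor{n/1}$, so the exponent vanishes again. This mirrors the attainment step in the proof of \cref{lem:g-exponent}. Consequently $G_\BB(n,x)=1$, and the displayed formulas for $\num_\BB$ and $\den_\BB$ follow by substituting this into the definitions.

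The only point needing care is the pairwise coprimality. It is what allows us to compute the gcd one factor $1+x^{2^k}$ at a time, with no further common divisor arising from shared cyclotomic pieces, unlike the ordinary case where $1+x$ divides $1+x^3$. This coprimality follows from $1+x^{2^k}=\Phi_{2^{k+1}}$, or alternatively from \cref{lem:cyclotomic-divisibility}: $\Phi_{2d}$ divides $1+x^{2^k}$ only when $2^k=dj$ with $j$ odd, which forces $d=2^k$. Finally, for $n=0$ the convention $G(0,x)=1$ covers the statement trivially.
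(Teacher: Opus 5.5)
Your proof is correct and follows the paper's argument. The paper likewise uses the pairwise coprimality of the irreducibles $1+x^{2^k}=\Phi_{2^{k+1}}(x)$ to reduce the gcd to one exponent minimum per factor. It then exhibits the binary partition with $\floor{n/2^k}$ parts equal to $2^k$ (remainder filled by any binary partition, where you use ones), which makes each exponent zero.
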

\begin{proof}
  Since $1+x^{2^k}=\Phi_{2^{k+1}}(x)$, these factors are pairwise coprime
  irreducibles.  For a fixed $k$, put $D \coloneq 2^k$.  The binary partition with
  $\floor{n/D}$ parts equal to $D$, together with any binary partition of
  the remainder $n-D\floor{n/D}$, makes the exponent of $1+x^D$ equal to zero.
  Thus, no $1+x^{2^k}$ divides all binary summands.
\end{proof}
The same pairwise coprimality shows that \cref{conj:conj5} is
equivalent to \cref{conj:conj7}: the binary numerator is coprime to
the binary denominator exactly when no factor $1+x^{2^s}$ divides
$\num_\BB(n,x)$.

We use the binary remainder reduction indicated in
\cite[Section 4, final paragraph]{BBFM}, which is the binary analogue of
\cite[Lemma 3.14]{BBFM}.  The additional point below is the same-ray
noncancellation argument for the remaining sum.

\begin{proof}[Proof of \cref{thm:intro-binary}]
Fix $D \coloneq 2^s\le n$, and let
\[
   \zeta \coloneq e^{\pi\ii/D}.
\]
Then $1+x^D$ is the minimal polynomial of $\zeta$, since $D$ is a power of
$2$.  It is enough to show that $\num_\BB(n,\zeta)\ne0$.

Write $n=aD+r$, with $a=\floor{n/D}$ and $0\le r<D$.  In a binary
summand $h_{\BB,\lambda}(x)$, the only factor $1+x^{2^k}$ that vanishes
at $x=\zeta$ is $1+x^D$.  Hence a summand can be nonzero only if
\[
   m_\lambda(D)=a.
\]
The surviving partitions are exactly
\[
   \lambda=(D^a,\rho),
   \qquad
   \rho\in\BB(r).
\]
For such $\lambda$, comparison of the exponents gives the exact identity
\[
   h_{\BB,(D^a,\rho)}(x)=C_{n,D}(x)h_{\BB,\rho}(x),
\]
where
\[
   C_{n,D}(x) \coloneq
   \prod_{\substack{k\ge0\\2^k\ne D}}
   (1+x^{2^k})^{\floor{n/2^k}-\floor{r/2^k}}.
\]
This product is independent of $\rho$.  Since $1+\zeta^{2^k}\ne0$ for
$2^k\ne D$, we have $C_{n,D}(\zeta)\ne0$.  Therefore, we have
\[
   \num_\BB(n,\zeta)=C_{n,D}(\zeta)\num_\BB(r,\zeta).
\]
This is the binary analogue of the reduction used in
\cite[Lemma 3.14(c,d)]{BBFM} and noted after \cite[Theorem 4.10]{BBFM}.

It remains to prove $\num_\BB(r,\zeta)\ne0$.  If $r=0$, this is immediate.
Assume $1\le r<D$.  For every binary part $2^k\le r$,
\[
   1+\zeta^{2^k}
   =2\cos\left(\frac{\pi2^k}{2D}\right)e^{\pi\ii2^k/(2D)},
\]
and the cosine is positive.  Thus each such factor has positive length and
argument $\pi2^k/(2D)$.  For $\rho\in\BB(r)$, the argument of
$h_{\BB,\rho}(\zeta)$ is
\[
   \frac{\pi}{2D}
   \sum_{2^k\le r} 2^k \cdot \left(\floor{r/2^k}-m_\rho(2^k)\right).
\]
Since $\sum_k2^k m_\rho(2^k)=r$, this argument is independent of $\rho$.  All
surviving terms are nonzero and lie on a single open ray.  Their sum is nonzero.
Therefore $\num_\BB(n,\zeta)\ne0$, so $1+x^D\nmid\num_\BB(n,x)$.
This proves \cref{conj:conj7}, and the coprimality statement follows from the equivalence above.
\end{proof}

\section{Odd partitions: proof of \cref{conj:conj8}}\label{sec:sec5}
We now specialize the restricted numerator for odd partitions at $x=-1$.
The definitions from the introduction give
\[
   h_{\OO,\lambda}(x)
    \coloneq \prod_{\substack{i\ge1\\ i\text{ odd}}}
      (1+x^i)^{\floor{n/i}-m_\lambda(i)},
\]
with common divisor
\[
   G_\OO(n,x) \coloneq \gcd_{\lambda\in\OO(n)}h_{\OO,\lambda}(x)
\]
and reduced numerator
\[
   \num_\OO(n,x) \coloneq \frac{1}{G_\OO(n,x)}
   \sum_{\lambda\in\OO(n)}h_{\OO,\lambda}(x)
   = \sum_{\lambda\in\OO(n)} q_{\OO,\lambda}(x)
\]
where the reduced summands $q_{\OO,\lambda}(x) \coloneq \frac{h_{\OO,\lambda}(x)}{G_{\OO}(n,x)}$
are defined in the obvious way.
The proof shows that, after this cancellation, only the all-ones partition
survives at $x=-1$.

\begin{proof}[Proof of \cref{thm:intro-odd}]
For an odd positive integer $e$, the factor $\Phi_{2e}$ divides $1+x^i$,
with $i$ odd, exactly when $i=ej$ for an odd integer $j$.  Therefore the
exponent of $\Phi_{2e}$ in $h_{\OO,\lambda}$ is
\[
   \sum_{\substack{j\ge1\\j\text{ odd}}}
      \left(\floor{\frac{n}{ej}}-m_\lambda(ej)\right).
\]
As before, this is minimized by maximizing the number of parts of the form $ej$ with $j$ odd.
In symbols, we have $\sum_{\substack{j\ge1\text{ odd}}} m_\lambda(ej) \le \floor{n/e}$
with equality when $\lambda$ has $\floor{n/e}$ parts equal to $e$.
Thus the exponent of $\Phi_{2e}$ in $G_\OO(n,x)$ is
\[
   \sum_{\substack{j>1\\j\text{ odd}}}\floor{\frac{n}{ej}}.
\]
Consequently, in $q_{\OO, \lambda}$, the exponent of $\Phi_{2e}$ is
\[
   \floor{\frac{n}{e}}-
   \sum_{\substack{j\ge1\\j\text{ odd}}}m_\lambda(ej).
\]

We now specialize to $x = -1$.
The only cyclotomic factor that vanishes is $\Phi_2(x)=x+1$, which corresponds to $e=1$.
The exponent of $\Phi_2$ in the reduced summand
is
\[ n-\ell(\lambda), \]
where $\ell(\lambda)$ is the number of parts of $\lambda$.
Among odd partitions of $n$, the maximum possible length is $n$,
attained uniquely by
$(1^n)$.  Hence all reduced summands vanish at $-1$ except the one
corresponding to $(1^n)$, that is to say,
\[ \num_\OO(n, -1) = q_{\OO, (1^n)}(-1)
  = \prod_{\substack{e>1\\e\text{ odd}}} \Phi_{2e}(-1)^{\floor{n/e}}. \]
By \cref{lem:cyclotomic-values}, $\Phi_{2e}(-1)=p$ if
$e=p^a$ is a power of an odd prime $p$, and $\Phi_{2e}(-1)=1$ otherwise.
Therefore
\[
   \num_\OO(n,-1)=
   \prod_{\substack{p\text{ odd prime}}}\prod_{a\ge1}
      p^{\floor{n/p^a}}=o(n!),
\]
by Legendre's formula with the prime $2$ omitted.
\end{proof}

\section{Ternary partitions: proofs of \cref{conj:conj9} and \cref{conj:conj10}}\label{sec:sec6}
Finally, we treat ternary partitions.  The same cancellation analysis gives
both the evaluation at $x=-1$ and the generating function needed at $x=1$.
We use
\[
   h_{\TT,\lambda}(x)
    \coloneq \prod_{k\ge0}(1+x^{3^k})^{\floor{n/3^k}-m_\lambda(3^k)},
\]
\[
   G_\TT(n,x) \coloneq \gcd_{\lambda\in\TT(n)}h_{\TT,\lambda}(x),
   \qquad
   \num_\TT(n,x)
   \coloneq \frac{1}{G_\TT(n,x)} \sum_{\lambda\in\TT(n)}h_{\TT,\lambda}(x)
   = \sum_{\lambda \in \TT(n)} q_{\TT, \lambda}(x).
\]
For every $k\ge0$, we have
\[
   1+x^{3^k}=\prod_{a=0}^k\Phi_{2\cdot3^a}(x).
\]
Therefore, the exponent of $\Phi_{2\cdot3^a}$ in $h_{\TT,\lambda}$ is
\[
   \sum_{k\ge a}\left(\floor{\frac{n}{3^k}}-m_\lambda(3^k)\right).
\]
As in the previous sections, this is minimized by using
$\floor{n/3^a}$ parts equal to $3^a$ and filling the remainder with ones.
Hence, the exponent of $\Phi_{2\cdot3^a}$ in $G_\TT(n,x)$ is
\[
   \sum_{k>a}\floor{\frac{n}{3^k}},
\]
and the exponent in a reduced summand $q_{\TT,\lambda}$ is
\[
   \floor{\frac{n}{3^a}}-
   \sum_{k\ge a}m_\lambda(3^k).
\]

\begin{proof}[Proof of \cref{thm:intro-ternary-minus}]
At $x=-1$, the only vanishing factor is $\Phi_2=x+1$, corresponding to $a=0$.
Its exponent in the reduced summand $q_{\TT,\lambda}$ is
\[ n-\ell(\lambda).  \]
The unique ternary partition of $n$ with $n$ parts is $(1^n)$.  Therefore
only the reduced summand $q_{\TT,(1^n)}$ survives at $-1$.

For this summand, the reduced exponent of $\Phi_{2\cdot3^a}$ is $0$ for
$a=0$, and is $\floor{n/3^a}$ for $a\ge1$.  Therefore, we have
\[
  \num_\TT(n,-1) = q_{\TT,(1^n)}(-1)
  = \prod_{a\ge1}\Phi_{2\cdot3^a}(-1)^{\floor{n/3^a}}.
\]
By \cref{lem:cyclotomic-values}, $\Phi_{2\cdot3^a}(-1)=3$.  Therefore, we have
\[ \num_\TT(n,-1) = 3^{\sum_{a\ge1}\floor{n/3^a}} = 3^{v_3(n!)}.  \]
In view of
\[ v_3((3n)!) = v_3((3n+1)!) = v_3((3n+2)!), \]
the block form in \cref{conj:conj9} follows.
\end{proof}

\begin{proof}[Proof of \cref{thm:intro-ternary-one}]
At $x=1$, \cref{lem:cyclotomic-values} gives
$\Phi_{2\cdot3^a}(1)=1$ for $a\ge1$.  Thus the value of $G_\TT(n,x)$ at
$x=1$ is determined only by the factor $\Phi_2(1)=2$.  Its exponent in
$G_\TT(n,x)$ is
\[
   \sum_{k\ge1}\floor{\frac{n}{3^k}}.
\]
For a ternary partition $\lambda\in\TT(n)$, we have
\[
   h_{\TT,\lambda}(1)=
   2^{\sum_{k\ge0}\floor{n/3^k}-\ell(\lambda)}.
\]
After division by $G_\TT(n,1) = 2^{\sum_{k \ge 1} \floor{n/3^k}}$, the reduced summand has value
\[
  q_{\TT, \lambda}(1) = 2^{n-\ell(\lambda)}.
\]
Therefore, we obtain
\[
   t(n)=\num_\TT(n,1)=\sum_{\lambda\in\TT(n)}2^{n-\ell(\lambda)}.
\]

This has the ordinary generating function
\[
   T(q) \coloneq \sum_{n\ge0}t(n)q^n
   =\prod_{j\ge0} \left( 1-2^{3^j-1}q^{3^j} \right)^{-1}.
\]
The factor with $j=0$ is $(1-q)^{-1}$.  Reindexing the remaining product,
we find
\[
  \prod_{j\ge1} \left( 1-2^{3^j-1}q^{3^j} \right)^{-1}
  = \prod_{j\ge0} \left( 1-2^{3^j-1}(4q^3)^{3^j} \right)^{-1}
  =T(4q^3).
\]
Thus, we find that
\[
   T(q)=\frac{1}{1-q}T(4q^3),
   \qquad\text{or}\qquad
   (1-q)T(q)=T(4q^3).
\]
Comparing coefficients of $q^m$, with the convention $t(-1)=0$, gives
\[
   t(m)-t(m-1)=0
\]
unless $m$ is divisible by $3$.  This proves
\[
   t(3n)=t(3n+1)=t(3n+2)\qquad(n\ge0).
\]
Comparing coefficients of $q^{3n}$ gives
\[
   t(3n)-t(3n-1)=4^n t(n).
\]
By block constancy, $t(3n-1)=t(3n-2)$.  Hence
\[
   t(3n)-t(3n-2)=4^n t(n)=2^{2n}t(n),
\]
which is the claimed finite-difference formula.
\end{proof}

\section{Appendix: Autonomous proof production and formalization}\label{sec:AxiomProver}
At Axiom Math, we are developing AxiomProver,
an AI system for mathematical research based on autoformalization.
As a test case, we gave AxiomProver
ten tasks: to autonomously formalize and prove Conjectures~\ref{conj:conj1}-\ref{conj:conj10},
offering examples of AI assistance as well as limitations in mathematical research.
This appendix is separate from the rest of the manuscript.

\subsection{Division of labor}
The prose exposition of this paper, including this appendix, was written without the use of AI.
AxiomProver runs Lean,
an interactive theorem prover and a functional programming language built on dependent type theory,
designed to provide a rigorous computational framework for validating mathematical proofs \cite{Lean}.
This appendix is included not only to record that formal proofs were produced,
but also to clarify the division of labor between the human-written
mathematics and the AI-generated formalizations.  The
mathematical statements supplied to AxiomProver were written in natural
language and can be found in \cite{BBFM}.

The role of AxiomProver was to convert the non-formal statements
into Lean and to construct machine-checkable formal proofs. Thus, in the tasks
described below, AxiomProver was neither being asked to make any of the conjectures,
nor to decide the final organization of the paper; it was being asked to
produce complete formal proofs of the supplied statements.

Autoformalization involves automatically and autonomously converting
natural-language mathematics into machine-verifiable formal language.
Lean files are created to pass type checkers,
while the natural language papers aim to communicate ideas to readers.

\subsection{Task specification and results}
Given problems in natural or formal language,
AxiomProver attempts to generate a complete formal proof.
When it succeeds, the system outputs two files:
\begin{itemize}
  \item \texttt{problem.lean}, which formalizes the problem statement;
  \item \texttt{solution.lean}, which represents a complete proof in a formal language.
\end{itemize}
The ten tasks share a uniform layout --- for $i \in \{1,\dots,10\}$,
we gave the following files as input for the $i$-th task:
\begin{itemize}
    \item \texttt{Conj\_$i$\_Statement.tex}
    \item \texttt{task.md} contains a single line
    \begin{quote}
        Please read `\texttt{Conj\_$i$\_Statement.tex}' and formalize
        Conjecture~\texttt{\textbackslash{}ref\{conj:conj\_$i$\}}.
    \end{quote}
    \item \texttt{.environment} specifies the Lean version used is 4.28.0.
\end{itemize}

Out of the $10$ tasks, AxiomProver was able to autonomously formalize and prove
Conjecture $j$ for $j\in\{2,5,7,8,9,10\}$.
It also found the counterexample $n=4$ to the part of \cite[Conjecture 6]{BBFM}
asserting that $\num_\BB(n,x)$ has log-concave coefficients
(see the remark at the end of \cref{sec:bin}).
Here ``autonomously'' means that, after receiving the
specified input files, AxiomProver generated the corresponding
\texttt{problem.lean} and \texttt{solution.lean} files without further
mathematical intervention in the proof script. The authors did not hand-write
or repair the Lean proofs line by line. The resulting formal proofs were then
used as machine-checkable certificates for the proved conjectures.

\subsection{Artifacts}
The relevant files are posted in the following repository:
\begin{center}
\url{https://github.com/AxiomMath/PartitionPolynomial}
\end{center}
The version of Lean used is 4.28.0.
Compatibility with other versions is not guaranteed due to the evolving
nature of the Lean~4 compiler and its core libraries.

At first glance, the proofs generated by AxiomProver do not resemble the narrative outlined in this paper.
Converting a Lean file into a proof understandable by\textbf{} humans is challenging
because Lean is designed as code for a type-checker, not as a reader-friendly explanation.
It makes all the ``obvious'' bookkeeping explicit, such as rewrite steps, coercions, side conditions,
and case splits, and tends to follow the most convenient lemmas and tactics for the library,
rather than the most clear conceptual route.
A mathematician can usually condense this significantly by relying on shared historical context,
standard arguments, and informal identifications that Lean cannot assume.
As a result, writing a paper from Lean files is not just about reformatting.
The authors must understand the formal script, reconstruct the underlying ideas,
and then translate the code into a narrative that emphasizes the key insights
while safely omitting routine details.

\end{document}